\newtheorem{lemma}{Lemma}
\newtheorem{thm}{Theorem}[section]
\newtheorem{cor}[thm]{Corollary}
\newtheorem{qst}[thm]{Question}
\theoremstyle{definition}
\newtheorem{dfn}[thm]{Definition}
\newtheorem{eg}[thm]{Example}
\theoremstyle{remark}
\renewcommand{\emptyset}{\varnothing}
\newcommand{\Z}{\mathbb Z}
\newcommand{\C}{\mathsf C}
\renewcommand{\phi}{\varphi}
\renewcommand{\i}[1]{\mathfrak{#1}}
\newcommand{\m}{\i{m}}
\newcommand{\X}{\mathsf X}
\newcommand{\A}{\mathsf A}
\newcommand{\B}{\mathsf B}
\newcommand{\E}{\mathsf E}
\renewcommand{\S}{\mathsf S}
\newcommand{\depth}{\mathop{\mathrm{depth}}\nolimits}
\newcommand{\reg}{\mathop{\mathrm{reg}}\nolimits}
\newcommand{\pd}{\mathop{\mathrm{pd}}\nolimits}
\newcommand{\socle}{\mathop{\mathrm{socle}}\nolimits}
\renewcommand{\*}{\bullet}
\title[Ideals with Large Projective Dimension]{Ideals with Larger Projective Dimension and Regularity}
\author[Beder, McCullough, N\'u\~nez-Betancourt, Seceleanu, Snapp, Stone]{Jesse Beder, Jason McCullough, Luis N\'u\~nez-Betancourt, \\ Alexandra Seceleanu, Bart Snapp, Branden Stone}
\address{Jesse Beder, University of Illinois at Urbana-Champaign, Department of Mathematics 1409 W. Green Street Urbana, IL 61801}
\email{beder@math.uiuc.edu}
\address{Jason McCullough, Department of Mathematics, Univeristy of California - Riverside, 202 Surge Hall, 900 University Ave., Riverside, CA 92521}
\email{jmccullo@math.ucr.edu}
\address{Luis N\'u\~nez-Betancourt, University of Michigan, Department of Mathematics, 3068 East Hall, 530 Church Street, Ann Arbor, MI, 48104-1043}
\email{luisnub@umich.edu}
\address{Alexandra Seceleanu, University of Illinois at Urbana-Champaign, Department of Mathematics 1409 W. Green Street Urbana, IL 61801}
\email{asecele2@illinois.edu}
\address{Bart Snapp, Department of Mathematics, The Ohio State University, 100 Math Tower, 231 West 18th Avenue, Columbus, OH 43210-1174}
\email{snapp@math.ohio-state.edu}
\address{Branden Stone, Department of Mathematics, University of Kansas, 405 Snow Hall, 1460, Jayhawk Blvd, Lawrence, Kansas 66045-7594}
\email{bstone@math.ku.edu}
\subjclass[2010]{Primary: 13D05; Secondary: 13D02}
\begin{document}

\begin{abstract} 
We define a family of homogeneous ideals with large projective dimension and regularity relative
to the number of generators and their common degree.  This family subsumes and
improves upon constructions given in \cite{Caviglia:2004} and
\cite{McCullough}.  In particular, we describe a family of three-generated homogeneous ideals in arbitrary characteristic whose projective dimension grows asymptotically as $\sqrt{d}^{\sqrt{d} - 1}$.
\end{abstract}

\maketitle

\section{Introduction}  

Throughout this paper let $K$ be a field of any characteristic and set $R =
K[x_1,\dots,x_n]$. We consider the following question of Stillman:

\begin{qst}[Stillman, {\cite[Problem 3.14]{PS}}]\label{Q1} 
Fix a sequence of natural numbers $d_1,\ldots,d_N$.  Does there exist a number $p$ (independent of n) such that
\[\pd(R/I) \le p\]
for all graded ideals $I$ with a minimal system of homogeneous generators of degrees $d_1,\ldots,d_N$?
\end{qst}

This question is open in all but low degree cases.  In \cite{Zhang},
Zhang's work on local cohomology modules in characteristic $p$
suggested that $\sum_{i = 1}^N d_i$ was an upper bound for $\pd(R/I)$.
In \cite{McCullough}, the second author showed this was false by
producing a family of ideals whose projective dimensions far exceeded
this bound.  However, in the three-generated ideal case, these ideals
had projective dimension of only $d + 2$ where $d$ is the common
degree of the generators.  To the best of our knowledge there were no
known ideals with three degree $d$ generators with larger projective
dimension.  Clearly then $d + 2$ is a lower bound for any answer to
the three generated case of Stillman's Conjecture. We note that by the
work of \cite{Burch} and later \cite{Bruns}, it is natural to ask
whether any three-generated ideals in degree $d$ have larger
projective dimension than this.

In this paper we generalize the family of ideals given in
\cite{McCullough} to a larger family with much larger projective
dimension.  In the three-generated case, we produce a family of ideals
with generators of degree $d$ and projective dimension larger than
$\sqrt{d}^{\sqrt{d} - 1}$.  We therefore give a new lower bound for
any answer to Stillman's question.

The paper is organized as follows.  In Section~\ref{S2} we recall some
previous results and definitions.  In Section~\ref{S3} we define our family
of ideals and compute its projective dimension.  In Section~\ref{S4} we
compute some specific examples and show that this family subsumes two
interesting families of ideals previously defined.  We conclude with
some computations and questions regarding the Castelnuovo-Mumford
regularity of these ideals.

\section{Preliminaries}\label{S2}

Let $R = K[x_1,\ldots,x_n]$ and let $I = (f_1,\ldots,f_N)$ be a
homogeneous ideal and set $d_i = \deg(f_i)$.  Let $F_\*$ be the
minimal graded free resolution of $R/I$.  Then we may write
\[
F_i = \bigoplus_{j\in\Z} R(-j)^{\beta_{i,j}},
\]
where $R(-j)$ denotes a rank one free module with generator in degree
$j$.  In this case $F_0 = R$ and $F_1 = \bigoplus_{j = 1}^N R(-d_j)$.
The exponents $\beta_{i,j}$ are called the Betti numbers of $R/I$.
We can define the projective dimension of $R/I$ as
\[\pd(R/I) = \max\{i\,|\,\beta_{i,j} \neq 0 \text{ for some $j$}\}.\]  
Thus, Stillman's
question can be rephrased by asking if $\pd(R/I)$ is bounded by a
formula dependent only on $F_1$.  

The Castelnuovo-Mumford regularity of $R/I$ is defined as
\[
\reg(R/I) = \max\{j-i\,|\,\beta_{i,j} \neq 0
\text{ for some $i$}\}.
\]  
The Betti numbers are often displayed in matrix form called a Betti
table.  In the $(i,j)$ entry we put $\beta_{i,j-i}$.  Thus, we can view
the projective dimension of $R/I$ as the index of the last nonzero
column in the Betti table and the regularity of $R/I$ as the index of
the last nonzero row.  

Let $\m$ be the graded maximal ideal of $R$.  We also denote the
length of the maximal regular sequence on $R/I$ contained in $\m$ by
$\depth(R/I)$.  Finally, we let $\socle(R/I)$ denote $\{x \in
R/I\,|\,x \m = 0\}$.  To compute projective dimension, we make use of
the graded version of the Auslander-Buchsbaum Theorem (see, e.g.,
\cite[Theorem 19.9]{Eisenbud}), so in order to show that $R/I$ has
maximal projective dimension, we need only show that $\socle(R/I) \neq
0$.

Further motivating Question~\ref{Q1} is Problem 3.15 of \cite{PS} is an analog of Stillman's question for
regularity: Is there a bound for $\reg(R/I)$ dependent only on
$d_1,\ldots,d_N$?  Caviglia showed that this question is equivalent to
Question~\ref{Q1}.  See \cite{Engheta05}, pages 11--14 for a nice
explanation of this argument.

It is clear that there is an affirmative answer to Stillman's question
when $N \le 2$ or when $d_1 = \cdots = d_N = 1$.  Eisenbud and Huneke
verified the case $N = 3$ and $d_1 = d_2 = d_3 = 2$ by showing that
for ideals $I$ generated by three quadrics, $\pd(R/I) \le 4$.  In \cite{Engheta10}, Engheta
verified the case $N = 3$ and $d_1 = d_2 = d_3 = 3$ giving $\pd(R/I)
\le 36$ for this case.  This bound is likely not tight as the largest
known projective dimension of $R/I$ for an ideal $I$ generated by
three cubics is just $5$.  The first such example was found by Engheta
\cite{Engheta10}.  A simpler example is given in \cite{McCullough}.

Few other special cases of Stillman's question are known.  However, in
\cite{McCullough}, the second author defined a family of homogeneous
ideals whose projective dimension grows quickly relative to the number
and degrees of the generators.  These ideals were defined as follows:
\begin{dfn} 
Fix integers $m,n,d$ such that $m \ge 1$, $n \ge 0$ and $d \ge
2$. Order the $M_{m,d-1} = \frac{(m + d - 2)!}{(m-1)!(d-1)!}$
monomials of degree $d-1$ over the variables $x_1,\ldots,x_m$ and
denote the $i^{\text{th}}$ such monomial by $Z_i$.  Let $p =
M_{m,d-1}$ and let $R = K[x_1,\ldots,x_m,y_{1,1},\ldots,y_{p,n}]$ be a
polynomial ring in $m + pn$ variables over $K$.  We define $I_{m,n,d}$
to be the ideal generated by the following $m + n$ degree $d$
homogeneous polynomials:
\[\left\{x_i^d\,|\,1 \le i \le m\right\} \cup \left\{\sum_{j = 1}^{p} Z_jy_{j,k}\,|\,1\le k \le n\right\}.\]
\end{dfn}
It was shown that the projective dimension of $R/I$ is 
\[
\pd(R/I) = m + n p = m + n \frac{(m + d - 2)!}{(m-1)!(d-1)!}.
\]
In the three-generated degree $d$ case ($m = 2$, $n = 1$), the
projective dimension of $R/I$ is $d + 2$.  In the general case with
$N$ degree $d$ generators ($m = 2$, $n = N - 2$), the projective
dimension of $R/I$ grows asymptotically as a polynomial in $d$ of
degree $N - 2$.  In the following section we generalize this example
and define a new family of ideals with projective dimension far
exceeding both of these.

\section{A New Family of Ideals}\label{S3}

Let $K$ be a field. Fix $g\geq 2$ and integers $m_1, \ldots, m_n$ such
that $m_n \ge 0$, $m_{n-1} \ge 1$ and $m_i \ge 2$ for $1 \le i \le n -
2$. Set:
\begin{itemize}
\item $M_n = m_n$,
\item $M_k = m_k-1$ for $k<n$,
\item $d_k = m_k + \cdots + m_n + 1$,
\item $d=d_1$.
\end{itemize}
Unless explicit bounds are given,
we'll use $j$ or $j'$ for an arbitrary integer in $\{1,2,\ldots,g\}$ and $k$ or
$k'$ for an arbitrary integer in $\{1,2,\ldots,n\}$.

Finally, for $0 \le k \le n$ let
\[
\mathcal{A}_k = \left\{ (a_{j, k'}) \, \left|\, \,
\begin{minipage}{40ex}
$0\leq a_{j, k'}\leq M_{k'}$ and $\sum\limits_{j=1}^g a_{j, k'} = m_{k'}$ for $1\leq k'\leq k$, and $a_{j, k'} = 0$ for $k < k' \leq n$
\end{minipage}
\right. \right\},
\]
\medskip
\[ 
R = K[\mathsf{X}, y_{\mathsf{A}}\,|\,\,\mathsf{X} = (x_{j, k}), \mathsf{A} \in \mathcal{A}_n ], 
\]
\medskip
\[ 
I_{g, (m_1, \ldots, m_n)} = (x_{1,1}^d, \ldots, x_{g,1}^d, f), 
\]
where
\[
f = \sum_{k=1}^{n-1} \sum_{\mathsf{A}\in \mathcal{A}_{k-1}}  \sum_{j=1}^g \mathsf{X}^{\mathsf{A}}  x_{j, k}^{m_k}  x_{j, k+1} ^{d_{k+1}}
  + \sum_{\mathsf{B}\in \mathcal{A}_n} \mathsf{X}^{\mathsf{B}} y_{\mathsf{B}}.
\]
By $\mathsf{X}^\A$ we mean $\prod_{j,k} x_{j,k}^{a_{j,k}}$, where $\A =
(a_{i,j})$.  The notation above was chosen so that the monomial terms
in the generator $f$ are all of the form $\mathsf{X}^\mathsf{A}$ or
$\mathsf{X}^\mathsf{A} y_\mathsf{A}$ for some $g \times n$ matrix
$\mathsf{A}$.  We note that the restrictions on $g$ and the $m_i$
guarantee that $\mathcal{A}_i \neq \emptyset$ for all $0\le i \le n - 2$ and
$\mathcal{A}_{n-1} \neq \emptyset$ and $\mathcal{A}_n \neq \emptyset$ if and only if $m_{n-1} \ge 2$.  Before
computing the projective dimension of these ideals, we give an example
in detail.

\begin{eg}\label{e1} 
Consider the ideal $I = I_{2,(2,2,2)}$.  Then $d = d_1 = 2 + 2 + 2 + 1
= 7$, $d_2 = 2 + 2 + 1 = 5$, and $d_3 = 2 + 1 = 3$.  $M_1 = M_2 = 1$
and $M_3 = 2$.  We then have that
\[
\mathcal{A}_0 = \{\left(\begin{smallmatrix}0&0&0\\0&0&0\\\end{smallmatrix}\right)\},
\]
\[
\mathcal{A}_1 = \{\left(\begin{smallmatrix}1&0&0\\1&0&0\\\end{smallmatrix}\right)\},
\]
\[
\mathcal{A}_2 = \{\left(\begin{smallmatrix}1&1&0\\1&1&0\\\end{smallmatrix}\right)\},
\]
\[
\mathcal{A}_3 = \{\left(\begin{smallmatrix}1&1&2\\1&1&0\\\end{smallmatrix}\right), \left(\begin{smallmatrix}1&1&1\\1&1&1\end{smallmatrix}\right), \left(\begin{smallmatrix}1&1&0\\1&1&2\\\end{smallmatrix}\right) \}.
\]
Finally, the ideal $I$ is 
\[ \left(x_{1,1}^7,x_{2,1}^7,f\right),\]
where 
\begin{eqnarray*}
f &=& \X^{^{\left(\begin{smallmatrix}0&0&0\\0&0&0\\\end{smallmatrix}\right)}}x_{1,1}^2 x_{1,2}^5 + \X^{^{\left(\begin{smallmatrix}0&0&0\\0&0&0\\\end{smallmatrix}\right)}} x_{2,1}^2 x_{2,2}^5 + \X^{^{\left(\begin{smallmatrix}1&0&0\\1&0&0\\\end{smallmatrix}\right)}}x_{1,2}^2 x_{1,3}^3 + \X^{^{\left(\begin{smallmatrix}1&0&0\\1&0&0\\\end{smallmatrix}\right)}} x_{2,2}^2 x_{2,3}^3\\
&&+ \,\X^{^{\left(\begin{smallmatrix}1&1&2\\1&1&0\\\end{smallmatrix}\right)}} y_{\left(\begin{smallmatrix}1&1&2\\1&1&0\\\end{smallmatrix}\right)} + \X^{^{\left(\begin{smallmatrix}1&1&1\\1&1&1\\\end{smallmatrix}\right)}} y_{\left(\begin{smallmatrix}1&1&1\\1&1&1\\\end{smallmatrix}\right)} + \X^{^{\left(\begin{smallmatrix}1&1&0\\1&1&2\\\end{smallmatrix}\right)}} y_{\left(\begin{smallmatrix}1&1&0\\1&1&2\\\end{smallmatrix}\right)}\\
& = & {x}_{1,1}^{2} {x}_{1,{2}}^{5}+{x}_{{2},1}^{2} {x}_{{2},{2}}^{5}+{x}_{1,1} {x}_{{2},1} {x}_{1,{2}}^{2} {x}_{1,{3}}^{3}+{x}_{1,1} {x}_{{2},1} {x}_{{2},{2}}^{2} {x}_{{2},{3}}^{3} \\
&&+ \, {x}_{1,1} {x}_{{2},1} {x}_{1,{2}} {x}_{{2},{2}} {x}_{1,{3}}^{2} y_{\left(\begin{smallmatrix}1&
      1&
      {2}\\
      1&
      1&
      0\\
      \end{smallmatrix}\right)}+{x}_{1,1} {x}_{{2},1} {x}_{1,{2}} {x}_{{2},{2}} {x}_{1,{3}} {x}_{{2},{3}} y_{\left(\begin{smallmatrix}1&
      1&
      1\\
      1&
      1&
      1\\
      \end{smallmatrix}\right)}\\
      &&+ \, {x}_{1,1} {x}_{{2},1} {x}_{1,{2}} {x}_{{2},{2}} {x}_{{2},{3}}^{2} y_{\left(\begin{smallmatrix}1&
      1&
      0\\
      1&
      1&
      {2}\\
      \end{smallmatrix}\right)}.
      \end{eqnarray*}
      We note that $\mathcal{A}_2$ is not used in the definition of $I$, and in general $\mathcal{A}_{n-1}$ is not used in the definition of $I_{g,(m_1,\ldots,m_n)}$.  Moreover, $I$ is an ideal with 3 degree 7 generators in a polynomial ring $R$ with $9$ variables and $R/I$ has projective dimension $9$ by the following theorem.
\end{eg}

\begin{thm}\label{Th} Using the notation above with $I = I_{g,(m_1,\dots,m_n)}$, $\depth(R/I) = 0$.
\end{thm}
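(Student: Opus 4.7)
The plan is to invoke the Auslander--Buchsbaum criterion noted in Section~\ref{S2}: since $R$ is a polynomial ring (graded Cohen--Macaulay with $\depth(R)=n$), showing $\depth(R/I)=0$ reduces to exhibiting a nonzero socle element in $R/I$, i.e.\ a monomial (or polynomial) $z \in R\setminus I$ such that $\m \cdot z \subseteq I$.

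\textbf{Step 1: Construct a socle candidate.} I would define an explicit monomial
\[
z \;=\; \prod_{j=1}^g \prod_{k=1}^n x_{j,k}^{\alpha_{j,k}} \cdot \prod_{\A\in \mathcal{A}_n} y_\A^{\beta_\A},
\]
where the exponents are dictated by the telescoping structure of $f$. The natural first attempt is $\alpha_{j,k} = M_k - 1$ at each level $k<n$ and a carefully chosen (possibly zero) power of the $y_\A$'s, so that for a distinguished matrix $\A^\star \in \mathcal{A}_n$ we have $x_{j,1}^{d-1}\,\X^{\A^\star}$ appearing in $z$ up to the top level $n$. The precise exponents must be read off by working through the identity $x_{j,k}^{m_k}x_{j,k+1}^{d_{k+1}} \cdot (\mathrm{monomial}) = \X^{\A'}\cdot (\mathrm{next\ level})$ for various $\A' \in \mathcal{A}_k$.

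\textbf{Step 2: Verify $\m\cdot z\subseteq I$.} The variables split into three types and I would handle each in turn.
\emph{(i)} For each $x_{j,1}$, by choice of $\alpha_{j,1}$, the product $x_{j,1}\cdot z$ contains the factor $x_{j,1}^d$ and hence lies in $(x_{1,1}^d,\ldots,x_{g,1}^d)\subseteq I$.
\emph{(ii)} For each $x_{j,k}$ with $k\ge 2$, I would produce an explicit multiplier $h_{j,k}\in R$ such that $h_{j,k}\cdot f \equiv x_{j,k}\cdot z \pmod{(x_{1,1}^d,\ldots,x_{g,1}^d)}$. The key idea is to start with the term of $f$ indexed by the pair $(k-1,j)$, namely $\X^\A x_{j,k-1}^{m_{k-1}}x_{j,k}^{d_k}$, multiply by the monomial that turns that summand into $x_{j,k}\cdot z$, then peel off the remaining summands of $f$ one level at a time: each leftover summand is itself killed either by another application of $f$ (because of the identity $d_{k+1}=d_k-m_k$) or by some $x_{i,1}^d$.
\emph{(iii)} For each $y_\A$ with $\A\in\mathcal{A}_n$, I would use the corresponding term $\X^\A y_\A$ of $f$: multiplying $f$ by the monomial that carries $\X^\A y_\A$ to $y_\A \cdot z$ yields $y_\A\cdot z$ plus ``excess'' terms, each of which is again in $(x_{1,1}^d,\ldots,x_{g,1}^d)$ after the level-by-level cancellation described above.

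\textbf{Step 3: Verify $z\notin I$.} I would argue via an appropriate monomial order (a reverse lex or weight order refined so that the $y_\A$ are smallest and the $x_{j,1}$ largest), under which the initial ideal of $I$ is generated by $x_{j,1}^d$ together with a single monomial $\mathrm{in}(f)$ which cannot divide $z$ by construction. Equivalently, and perhaps more cleanly, one can specialize the $y_\A$'s to $0$ and verify that $z$ remains outside the resulting (simpler) ideal by a direct multidegree argument.

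\textbf{Main obstacle.} The delicate part is the bookkeeping in Step~2(ii): each application of the generator $f$ to cancel an unwanted monomial introduces several new unwanted monomials, one for each $(k',j')$ pair in the definition of $f$. The argument succeeds only because the telescoping identity $d_{k+1}=d_k-m_k$ and the combinatorics of the sets $\mathcal{A}_k$ are arranged so that every new monomial either lands in an earlier or later ``level'' where it can be absorbed by another $f$-term, or accumulates enough of a power of some $x_{i,1}$ to be killed by $x_{i,1}^d$. Organizing this induction on $k$ cleanly, and checking that the chain terminates (rather than spinning off new unmatched terms), is where the real work lies.
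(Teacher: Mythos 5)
Your overall strategy --- exhibit a nonzero socle element and invoke the graded Auslander--Buchsbaum theorem --- is exactly the paper's, but as written the proposal has genuine gaps. First, the socle element is mis-specified: the correct candidate is the pure $x$-monomial $\S = \prod_{j,k} x_{j,k}^{d_k-1}$ (exponent $d_k - 1 = m_k + \cdots + m_n$ in column $k$), involving no $y$-variables at all. Your suggested exponents $\alpha_{j,k} = M_k - 1 = m_k - 2$ are far too small to absorb the telescoping, and introducing positive powers $\beta_\A$ of the $y_\A$'s is both unnecessary and would complicate the verification that every $y_\A$ multiplies the element into $I$. Since you explicitly defer ``reading off the precise exponents,'' the construction on which the whole proof hinges is not actually carried out.

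Second, the ``main obstacle'' you identify --- that each application of $f$ spawns new unwanted terms and one must check the process terminates --- is precisely the missing content, and it is the heart of the proof. The paper packages it as a lemma proved by induction on $k$: letting $\E_k$ be the matrix whose first $k$ columns are filled with $d_{k'}-1$ and which is zero elsewhere, one shows $\X^{\E_k} x_{j,k+1}^{d_{k+1}} \in I$. The induction closes because every unwanted term of $f\cdot \X^\C$, after a case analysis on whether the indexing matrices agree in their initial columns (and on $k' < k$, $k' = k$, $k' > k$), is seen to contain a factor of the form $\X^{\E_t} x_{s,t+1}^{d_{t+1}}$ with $t < k$, hence lies in $I$ by the inductive hypothesis; there is no infinite regress, but establishing that requires the case analysis you have not supplied (including the point that when $k' > k$ at least two entries of column $k$ of $\B$ are nonzero, which uses the hypotheses $m_i \ge 2$). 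Finally, your Step 3 is also shaky: the given generators are not claimed to form a Gr\"obner basis, so showing that $z$ avoids the ideal generated by their initial terms does not show $z \notin \mathrm{in}(I)$. The clean argument is elementary: a monomial lies in an ideal generated by polynomials only if some term of some generator divides it, and no term of any $x_{j,1}^d$ or of $f$ divides $\S$.
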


In the following proofs, we say that $\A = (a_{j,i}) \in
\mathcal{A}_k$ and $\B = (b_{j,i})\in \mathcal{A}_{k'}$ \emph{start
  the same} if $a_{j,i} = b_{j,i}$ for all $i \leq \min(k , k')$ and
all $j$ with $1 \le j \le g$.  Note that if $\A \in \mathcal{A}_0$,
then $\A$ and $\B$ start the same for all $\B \in \mathcal{A}_k$,
$0\le k \le n$.

To prove the theorem, we first need the following lemma:

\begin{lemma} For each $k$, $0 \le k \le n - 1$, let $\mathsf{E}_k = (e_{j', k'})$ be a $g\times n$ matrix where $e_{j', k'} = d_{k'} - 1$ for $1 \le j' \le g$, $1 \le k' \le k$ and zero elsewhere. Then
\[
	\mathsf{X}^{\mathsf{E}_k} x_{j, k+1}^{d_{k+1}} \in I
\]
for all $j$ such that $1 \le j \le g$ (interpret $\mathsf{E}_0 = \mathsf{0}$). 
\end{lemma}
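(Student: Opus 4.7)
The plan is to proceed by induction on $k$. The base case $k=0$ is immediate: $\E_0=\mathsf{0}$, so $\X^{\E_0}x_{j,1}^{d_1}=x_{j,1}^d$ is one of the generators of $I$.

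For the inductive step ($k\ge 1$), assume $\X^{\E_{k'-1}}x_{j',k'}^{d_{k'}}\in I$ for all $1\le k'\le k$ and $1\le j'\le g$. Fix any $\A_0=((a_0)_{j',k'})\in \mathcal{A}_{k-1}$ (nonempty by hypothesis on the $m_i$) and define
\[
\mu=\left(\prod_{k'=1}^{k-1}\prod_{j'=1}^g x_{j',k'}^{d_{k'}-1-(a_0)_{j',k'}}\right)\cdot\left(\prod_{j'\ne j} x_{j',k}^{d_k-1}\right)\cdot x_{j,k}^{d_k-1-m_k}.
\]
All exponents are non-negative because $(a_0)_{j',k'}\le M_{k'}\le d_{k'}-1$ and $d_k-1-m_k=d_{k+1}-1\ge 0$. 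Since $f\in I$, also $\mu f\in I$. Expanding $\mu f$ term-by-term over the summands of $f$, I will verify that the summand $t_0=\X^{\A_0}x_{j,k}^{m_k}x_{j,k+1}^{d_{k+1}}$ contributes exactly $\mu t_0=\X^{\E_k}x_{j,k+1}^{d_{k+1}}$ (a direct exponent computation: the $\A_0$ and $\mu$ factors cancel in columns $\le k-1$ to yield $d_{k'}-1$, while column $k$ assembles to $d_k-1$ in every row), and that every other summand $t$ contributes a monomial lying in $I$. Subtracting then yields $\X^{\E_k}x_{j,k+1}^{d_{k+1}}\in I$.

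For each non-chosen summand $t$, the plan is to exhibit a pair $(j^*,k^*)$ with $1\le k^*\le k$ such that $\X^{\E_{k^*-1}}x_{j^*,k^*}^{d_{k^*}}$ divides $\mu t$, placing $\mu t$ in $I$ by the inductive hypothesis (or by the base case when $k^*=1$, where the factor is $x_{j^*,1}^d$). If $t$ lies in the $k'$-th group of $f$ with $k'<k$, the attached matrix $\A\in\mathcal{A}_{k'-1}$ is compared with $\A_0$ on columns $\le k'-1$: at the smallest column $k^*$ of disagreement I choose $j^*$ in a row where $\A$ strictly exceeds $\A_0$ (such a row exists by equality of column sums $m_{k^*}$); if $\A$ and $\A_0$ agree throughout columns $\le k'-1$ I take $k^*=k'$ and $j^*=j'$, using that $x_{j',k'}^{m_{k'}}$ together with $(a_0)_{j',k'}\le M_{k'}\le m_{k'}-1$ raises the exponent of $x_{j',k'}$ to at least $d_{k'}$. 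If $t$ lies in the $k'=k$ group but differs from $t_0$, the same dichotomy applies: either $\A\ne\A_0$ and the smallest disagreement column yields $(j^*,k^*)$ with $k^*\le k-1$, or $\A=\A_0$ and $j'\ne j$, in which case $k^*=k$, $j^*=j'$, with the $x_{j',k}^{m_k}$ factor supplying the extra $m_k$. Finally, if $t$ lies in the $k'$-th group with $k'>k$ or equals some $\X^\B y_\B$, I compare $\A$ (resp.\ $\B$) with $\A_0$ only on columns $\le k-1$: disagreement is handled as above, and if they agree I take $k^*=k$, using that the column-$k$ entries of $\A$ (resp.\ $\B$) sum to $m_k$ with each entry $\le M_k=m_k-1$ (since $k\le n-1<n$), so some $j^*\ne j$ has a positive entry, forcing the exponent of $x_{j^*,k}$ in $\mu t$ up to at least $d_k$. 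In every case, for $k''<k^*$ the exponent of $x_{j'',k''}$ in $\mu t$ equals exactly $d_{k''}-1$ thanks to the agreement on earlier columns, so $\X^{\E_{k^*-1}}x_{j^*,k^*}^{d_{k^*}}$ truly divides $\mu t$.

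The main obstacle is the uniform case analysis above, verifying simultaneously that the exponent thresholds $d_{k''}-1$ (at columns $k''<k^*$) and $d_{k^*}$ (at column $k^*$) are met in $\mu t$ for every type of summand. The key combinatorial input is the strict bound $M_{k''}\le m_{k''}-1$ for $k''<n$, which prevents a single row of a matrix in $\mathcal{A}_{k''}$ from absorbing the entire column-$k''$ mass and guarantees the existence of a suitable $j^*\ne j$ in the agreement sub-case.
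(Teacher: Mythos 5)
Your proof is correct and follows essentially the same route as the paper's: induction on $k$, multiplication of $f$ by the complementary monomial (your $\mu$ is exactly the paper's $\X^{\mathsf{C}}$), and the same case analysis (first column of disagreement; then $k'<k$, $k'=k$, and $k'>k$ together with the $y_{\mathsf{B}}$-terms), each non-chosen term being absorbed into $I$ via the inductive hypothesis. Your uniform use of the bound $M_k=m_k-1$ in the $k'>k$ case is a slightly cleaner packaging of the paper's split into $k\le n-2$ and $k=n-1$, but it is the same argument.
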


\begin{proof}
Induct on $k$. When $k=0$, this says $x_{j, 1}^d\in I$ and indeed these are the first $g$ generators of $I$. Assume $k\geq 1$, and choose any $\mathsf{A}\in \mathcal{A}_{k-1}$. Note that $\mathsf{A}\leq \mathsf{E}_k$, so
\[
\mathsf{X}^{\mathsf{A}} x_{j, k}^{m_{k}} x_{j, k+1}^{d_{k+1}} \X^\C =  \mathsf{X}^{\mathsf{E}_k} x_{j, k+1}^{d_{k+1}}
\]
for some matrix $\C$ with nonnegative integer entries.  Notice that the matrix $\C$ is of the form
\[
	\left(\begin{array}{lllllll}
		d_1 - 1 -a_{1,1}  & \cdots & d_{k-1} - 1 - a_{1,k-1} & d_k - 1       & 0      & \cdots & 0 \\
		\phantom{move}\vdots               &        & \phantom{move over}\vdots                  & \phantom{mv}\vdots        & \,\vdots &        & \,\vdots \\
		d_1 - 1 -a_{j-1,1}  & \cdots & d_{k-1} - 1 - a_{j-1,k-1}	 & d_k - 1       & 0      & \cdots & 0 \\
		d_1 - 1 -a_{j,1} &  \cdots & d_{k-1} - 1 - a_{j,k-1} & d_k - 1 - m_k & 0      & \cdots & 0 \\
		d_1 - 1 -a_{j+1,1} &  \cdots & d_{k-1} - 1 - a_{j+1,k-1} & d_k - 1       & 0      & \cdots & 0 \\
		\phantom{move}\vdots                   &        & \phantom{move over}\vdots                  & \phantom{mv}\vdots        & \,\vdots &        & \,\vdots \\		
		d_1 - 1 -a_{g,1} &  \cdots & d_{k-1} - 1 - a_{g,k-1} & d_k - 1       & 0      & \cdots & 0\\
	\end{array}\right)\begin{array}{l}\\\\\\\\\\\\\\.\\\end{array}
\]
It is enough to show $h \X^\C \in I$ for all terms $h$ of $f$ such
that $h\neq \mathsf{X}^{\mathsf{A}} x_{j, k}^{m_{k}} x_{j,
  k+1}^{d_{k+1}}$. The remaining terms of $f$ are of the form
\[
h = \mathsf{X}^{\mathsf{B}} x_{j', k'}^{m_{k'}} x_{j', k'+1}^{d_{k'+1}}
\]
for some $\B \in \mathcal{A}_{k'-1}$ with $1 \le k' \le n - 1$ and
some $j' \le g$ such that $\A \neq \B$ or $\A = \B$ and $j' \neq j$ or
of the form
\[
h = \mathsf{X}^{\mathsf{B}} y_{\mathsf{B}}
\]
for some $\B \in \mathcal{A}_{k'}$ with $k' = n$.  Assume $h$ is one
of these terms and let $M = h\X^\C$.

If $\mathsf{A}$ and $\mathsf{B}$ do not start the same, then consider the first index $t\leq \min(k - 1, k'-1)$ where they disagree. Then the exponent of $x_{s, t}$ in $M$ will be at least $d_t$ for some $s$, and the exponents of $x_{s', t'}$ will be $d_{t'} - 1$ for all $t' < t$ (since $\mathsf{A}$ and $\mathsf{B}$ agree here), so by the inductive hypothesis, $M$ is in $I$.

Now assume that $\mathsf{A}$ and $\mathsf{B}$ start the same. We'll break this up according to cases:

{\bf Case $\boldsymbol{k' < k}$}: The exponent of $x_{j', k'}$ in $M$ is at least $d_{k'}$.  This is true since we added $d_{k'} - 1 - a_{j', k'}$ (the $(j',k')$ entry of $\C$) to $m_{k'}$ and $m_{k'} \ge a_{j', k'} + 1$. Because $\A$ and $\B$ start the same, we can write 
\[
	M = \X^{\E_{k'}}x_{j', k'}^{d_{k'}} \X^\mathsf{D}
\]
where $\mathsf{D}$ is some $g \times n$ matrix with nonnegative integral entries.  The inductive hypothesis again implies this term is in $I$.

{\bf Case $\boldsymbol{k' = k}$}: Then $\A = \B$.  Recall that $k \le n-1$ and thus $m_{k'} \ge 1$.  Since the terms defined by $\A$ and $\B$ are distinct, $j\neq j'$. So the exponent of $x_{j', k'}$  in $M$ is at least $d_{k'}$, and this term is in $I$.

{\bf Case $\boldsymbol{k' > k}$}: Notice that at least two terms in each column $k$ of $\B$ are nonzero.  This is follows when $k \le n - 2$ because $m_i \ge 2$ for $1 \le i \le n-2$.  If $k = n-1$ then $k' = n$ and $\B \in \mathcal{A}_n$.  This forces $m_{n-1} > 2$ (if $m_{n-1} = 1$ then $M_{n-1} = 0$ and $\mathcal{A}_{n-1} = \mathcal{A}_n = \emptyset$) and thus at least two terms in column $n-1$ of $\B$ are nonzero.  Now there exists some $j' \neq j$ such that $b_{j', k}$ is positive, and hence the exponent of $x_{j', k}$ in $M$ is at least $d_k$, so this term is in $I$.
\end{proof}

\begin{proof}[Proof of Theorem~\ref{Th}] We'll show that $R/I$ has depth zero by showing that the element

\[ \S = \mathsf{X}^{\mathsf{T}} \in (I:\m) - I, \]

\noindent where $\mathsf{T} = (t_{j, k})$ and $t_{j, k} = d_k - 1$; that is, the image of $\mathsf{S}$ in $R/I$ is in $\socle(R/I)$. 

Since no term of any generator of $I$ divides $\mathsf{S}$, it is clear that $\mathsf{S} \notin I$.  So we must show that every variable
multiplies $\mathsf{S}$ into $I$.  The fact that $x_{j,k} \mathsf{S} \in I$ for every $j,k$ follows from the following preceding Lemma.  We now show that $y_\mathsf{A} \S \in I$ for all $\mathsf{A} \in \mathcal{A}_n$. Notice that 
\[
	y_\mathsf{A} \S = y_\mathsf{A} \mathsf X^\mathsf{A}\cdot \mathsf X^\mathsf{C}
\]
where $\C$ is again some $g \times n$ matrix with nonnegative integral entries and $y_\mathsf{A} \mathsf X^\mathsf{A}$ is the term in $f$ associated to $y_\mathsf{A}$.  As before, it is enough to show $h\mathsf X^\mathsf{C}\in I$ for all terms $h$ in $f$ such that $h\neq y_\mathsf{A} \mathsf X^\mathsf{A}$.  Each $h$ has an $\mathsf X^\mathsf{B}$ as a factor, for some $\mathsf B \in \mathcal A_k$, $k\in \{1,2,\ldots,n-2,n\}$.

If $\mathsf A$ and $\mathsf B$ do not start the same, let $t$ be the first index where they differ. Then the exponent of some $x_{s, t}$ will be at least $d_t$ for some $s$, so by the lemma, this term is in $I$.

Otherwise, $\mathsf A$ and $\mathsf B$ start the same and $k < n-1$ (if $k = n$ then they can not start the same).  In other words, 
\[
	h = \mathsf X^\mathsf{B}x_{j,k}^{m_k}x_{j,k+1}^{d_{k+1}}.
\]
Hence $h\mathsf X^\mathsf{C}$ has $x_{j,k+1}^{d_{k+1}}\mathsf X^{\mathsf{E}_k}$ as a factor and thus, by the lemma, is an element of $I$.

\end{proof}

\begin{cor} 
\[\pd(R/I) = \prod_{i = 1}^{n-1} \left(\frac{(m_i + g - 1)!}{(g - 1)!(m_i)!} - g\right)\left(\frac{(m_n + g - 1)!}{(g - 1)!(m_n)!}\right) + g n.\]
\end{cor}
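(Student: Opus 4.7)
The plan is to derive the projective dimension directly from Theorem~\ref{Th} via the graded Auslander--Buchsbaum formula. Since $R$ is a polynomial ring in $N$ variables we have $\depth(R) = N$, and combining $\pd(R/I) < \infty$ with $\depth(R/I) = 0$ yields $\pd(R/I) = N$. So the entire corollary reduces to a bookkeeping exercise: count the variables of $R$.

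The variables of $R$ split into two families. The $x_{j,k}$ with $1 \le j \le g$ and $1 \le k \le n$ contribute exactly $gn$, producing the additive term in the formula, while the remaining variables are the $y_\A$ indexed by $\A \in \mathcal{A}_n$. Hence the task is to show that $\pd(R/I) = gn + |\mathcal{A}_n|$ equals the stated product, which reduces the corollary to evaluating $|\mathcal{A}_n|$.

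To compute $|\mathcal{A}_n|$ I would observe that the defining constraints decouple column by column: each column of an element $\A \in \mathcal{A}_n$ is an independent nonnegative integer $g$-tuple with entry bound $M_{k'}$ and column sum $m_{k'}$, and there are no cross-column conditions. Hence $|\mathcal{A}_n|$ factors as a product over $k'$ of column counts. For $k' = n$ we have $M_n = m_n$, so the upper bound is vacuous, and the number of weak compositions of $m_n$ into $g$ parts is the stars-and-bars value $\frac{(m_n + g - 1)!}{(g-1)!\,m_n!}$. For $k' < n$ we have $M_{k'} = m_{k'} - 1$, so starting from the unrestricted count $\frac{(m_{k'} + g - 1)!}{(g-1)!\,m_{k'}!}$ one subtracts those compositions violating the upper bound; since the column sum equals $m_{k'}$, any violation forces exactly one entry to equal $m_{k'}$ with all others zero, giving precisely $g$ bad tuples per column. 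Multiplying these column counts produces the displayed formula.

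There is no real obstacle here: Theorem~\ref{Th} has already done all of the homological work, and only the elementary column-wise enumeration remains. The one subtle point worth flagging in the write-up is that the inclusion--exclusion for columns $k' < n$ truncates after a single term, which uses in an essential way that the column sum equals the forbidden value $m_{k'}$ itself.
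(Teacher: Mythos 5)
Your proposal is correct and follows the same route as the paper: apply graded Auslander--Buchsbaum to Theorem~\ref{Th} and count the variables of $R$, namely the $gn$ variables $x_{j,k}$ plus one $y_\A$ for each $\A \in \mathcal{A}_n$, whose cardinality factors column by column as the number of degree-$m_k$ monomials in $g$ variables (excluding the $g$ pure powers when $k<n$). Your inclusion--exclusion justification of the ``subtract $g$'' step is just a slightly more explicit version of the paper's observation that the excluded columns are exactly the pure powers.
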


\begin{proof} This follows from the graded version of the Auslander-Buchsbaum Theorem  and by counting the number of variables in the $R$.  We get $gn$ variables $x_{j,k}$ with $1 \le j \le g$ and $1 \le k \le n$.  For each $\A \in \mathcal{A}_n$, we get a variable $y_\A$.  Note that $\mathcal{A}_n$ consists of exactly those matrices $\A$ with nonnegative integer entries such that
\begin{enumerate}
\item All the entries in column $k$ sum to $m_k$.
\item For all $k < n$, there are at least two nonzero entries in column $k$.
\end{enumerate}
In other words, the term $\prod_{j = 1}^g x_{j,k}^{a_{j,k}}$ is a monomial of degree $m_k$ in $g$ variables and when $k < n$, this monomial is not a pure power.  The formula for the projective dimension follows by counting all such monomials.
\end{proof}

\begin{eg} Continuing the notation from Example~\ref{e1}, the previous theorem shows us that 
\[\S = \X^{^{\left(\begin{smallmatrix}6&
      4&
      2\\
      6&
      4&
      {2}\\
      \end{smallmatrix}\right)}} = x_{1,1}^6 x_{2,1}^6 x_{1,2}^4 x_{2,2}^4 x_{1,3}^2 x_{2,3}^2 \in (I:\m) - I.
      \]
So the image of $\S$ in $R/I$ is in the socle of $R/I$.  It follows that $\depth(R/I) = 0$ and hence $\pd(R/I) = 9$.
\end{eg}

\begin{cor} Over any field $K$ and for any positive integer $p$, there exists an ideal $I$ in a polynomial ring $R$ over $K$ with three homogeneous generators in degree $p^2$ such that $\pd(R/I) \ge p^{p-1}$.
\end{cor}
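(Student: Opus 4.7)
The plan is to apply the preceding corollary to the family $I_{g,(m_1,\ldots,m_n)}$ from Section~\ref{S3} with $g = 2$, since that choice produces an ideal with exactly three homogeneous generators $x_{1,1}^d$, $x_{2,1}^d$, and $f$, all of common degree $d = m_1 + m_2 + \cdots + m_n + 1$. To obtain the claimed bound, I would tune the tuple $(m_1,\ldots,m_n)$ so that $d = p^2$ while the projective dimension formula of the previous corollary evaluates to at least $p^{p-1}$.

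When $g = 2$, the expression $\tfrac{(m_i+1)!}{1!\,m_i!}$ simplifies to $m_i+1$, so that formula becomes
\[
\pd(R/I) = (m_n+1)\prod_{i=1}^{n-1}(m_i-1) + 2n.
\]
The natural candidate is $n = p$, $m_1 = m_2 = \cdots = m_{p-1} = p+1$, and $m_p = 0$. The degree condition $m_1 + \cdots + m_n = p^2-1$ follows from the identity $(p-1)(p+1) = p^2-1$, and direct substitution gives $\pd(R/I) = p^{p-1} + 2p \ge p^{p-1}$.

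The remaining step is to confirm that this choice satisfies the hypotheses needed to define $I_{g,(m_1,\ldots,m_n)}$: namely $m_n \ge 0$, $m_{n-1} \ge 1$, and $m_i \ge 2$ for $1 \le i \le n-2$. For $p \ge 2$ all three are immediate: $m_p = 0$, $m_{p-1} = p+1 \ge 1$, and $m_i = p+1 \ge 2$ at the intermediate indices. The degenerate case $p = 1$ needs no construction since $p^{p-1} = 1$. I do not anticipate any genuine obstacle here; the corollary amounts to optimizing the partition $(m_1,\ldots,m_n)$ of $p^2 - 1$ inside the projective dimension formula, and the choice above is a transparent way to extract a $(p-1)$-fold product of $p$'s while keeping the constraint $g = 2$ that enforces three generators.
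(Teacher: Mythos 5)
Your proposal is correct and is exactly the construction the paper uses: the ideal $I_{2,(p+1,\ldots,p+1,0)}$ with $p-1$ copies of $p+1$, whose degree is $(p-1)(p+1)+1=p^2$ and whose projective dimension is $p^{p-1}+2p$ by the preceding corollary. Your verification of the hypotheses on the $m_i$ and the aside on $p=1$ are fine additions but the argument is the same as the paper's.
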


\begin{proof}
This follows from the previous Corollary by taking the ideal 
\[
I = I_{2,({\underbrace{\scriptstyle p+1,\dots,p+1}_{p-1 \,\,\text{times}}},0)}.
\]
\end{proof}

We note that this answers two questions posed by the second author in the negative.  The following result can be viewed as an improvement to Corollary~4.7 in \cite{McCullough}.

\begin{cor} Over any field $K$ and for any positive integer $p$, there exists an ideal $I$ in a polynomial ring $R$ over $K$ with $2p+1$ homogeneous generators in degree $2p+1$ such that $\pd(R/I) \ge p^{2p}$. 
\end{cor}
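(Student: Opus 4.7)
The plan is to apply the formula of the previous Corollary to a carefully chosen specialization of the family $I_{g,(m_1,\ldots,m_n)}$. Since the construction yields $g+1$ generators of common degree $d=m_1+\cdots+m_n+1$, to produce $2p+1$ generators in degree $2p+1$ I would take $g=2p$ and look for a tuple $(m_1,\ldots,m_n)$ of nonnegative integers summing to $2p$. The natural candidate is
\[
(m_1,\ldots,m_n)=(\underbrace{2,\ldots,2}_{p\text{ times}},0),\qquad n=p+1,
\]
since keeping each $m_i$ as small as the hypotheses allow maximizes the number of factors appearing in the product formula for $\pd(R/I)$.

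First I would verify that these parameters satisfy the hypotheses of the construction: $g=2p\ge 2$, $m_n=0\ge 0$, $m_{n-1}=2\ge 1$, and $m_i=2\ge 2$ for $1\le i\le n-2$; and I would confirm that the number of generators and their common degree come out to $2p+1$ as required. Next I would substitute into the closed form for $\pd(R/I)$ from the previous Corollary. The last factor simplifies to $\binom{m_n+g-1}{g-1}=\binom{2p-1}{2p-1}=1$ because $m_n=0$, while each of the $n-1=p$ factors in the product equals
\[
\binom{m_i+g-1}{g-1}-g=\binom{2p+1}{2}-2p=p(2p+1)-2p=p(2p-1).
\]
This yields $\pd(R/I)\ge (p(2p-1))^p=p^{p}(2p-1)^{p}$, and the desired bound $p^{2p}$ follows immediately from $2p-1\ge p$ for $p\ge 1$.

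I do not anticipate any genuine obstacle: the substantive work is already contained in the projective-dimension formula, and what remains is the arithmetic of substituting and estimating binomials. The only judgment call is the choice of parameters; the symmetric tuple above is chosen to simultaneously hit the prescribed number of generators, the prescribed common degree, and the prescribed exponential lower bound on projective dimension, but other tuples (for instance with one larger $m_i$ balancing a smaller one elsewhere) would likely give comparable or slightly sharper estimates.
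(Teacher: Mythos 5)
Your proposal is correct and takes essentially the same approach as the paper: the paper simply specializes to $I = I_{2p,(\underbrace{\scriptstyle 2,\dots,2}_{p\text{ times}})}$ (no trailing zero, so $n=p$), for which the formula gives $\pd(R/I) = \bigl(p(2p-1)\bigr)^{p-1}p(2p+1) + 2p^2 \ge p^{2p}$. Your variant with the appended $m_n=0$ satisfies all the hypotheses and yields the same bound by the same arithmetic, so the difference is only a cosmetic choice of parameters.
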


\begin{proof} Take $I$ to be the ideal
\[
I_{2p,(\underbrace{\scriptstyle 2,2,2,\dots,2}_{p \,\,\text{times}})}.
\]
\end{proof}

Neither of these results gives an answer to Stillman's Question, but they impose large lower bounds on any possible answer.

\section{Examples, Special Subfamilies and Regularity Questions}\label{S4}

First we note that the family of ideals defined by the second author are a subfamily of the ideals defined above.  Using the notation in Section 2, we recall the definition for positive integers $n,d$ we define the ideal
\[ I_{m,1,d} = (x_1^d,\ldots,x_2^d,f),\]
with 
\[ f = \sum_i Z_i y_i,\]
where $Z_i$ runs through the degree $d-1$ monomials in the variables $x_1,\ldots,x_d$.  Up to relabeling of the variables, we note that
\[I_{m,1,d} = I_{m,(d)}\]
as defined in the previous section.  (We may replicate the last generator using new variables to get the full ideal $I_{m,n,d}$.)  As stated earlier, the three-generated version of these ideals satisfies $\pd(R/I) = d+2$  when the generators were taken in degree $d$.  Here we give a specific example of our new construction that improves upon this example.

\begin{eg} $I = I_{2,(3,1)}$\\

\noindent This is an ideal with 3 quintic generators such that $\pd(R/I) = 8$. \\

Let $R$ be the following polynomial ring
$$R = {K\left[{x}_{1,1},{x}_{1,{2}},{x}_{{2},1},{x}_{{2},{2}},y_{\bgroup\left( \begin{smallmatrix}{2}&
              1\\
                    1&
                          0\\
                                \end{smallmatrix}\right) \egroup},y_{\bgroup\left( \begin{smallmatrix}1&
              0\\
                    {2}&
                          1\\
                                \end{smallmatrix}\right) \egroup},y_{\bgroup\left( \begin{smallmatrix}1&
              1\\
                    {2}&
                          0\\
                                \end{smallmatrix}\right) \egroup},y_{\bgroup\left( \begin{smallmatrix}{2}&
              0\\
                    1&
                          1\\
                                \end{smallmatrix}\right) \egroup}\right]}.$$

\noindent Then the ideal $I$ is given by
\begin{align*}
I = ({x}_{1,1}^{5},{x}_{{2},1}^{5}&,{x}_{1,1}^{3} {x}_{1,{2}}^{2}+{x}_{{2},1}^{3} {x}_{{2},{2}}^{2}+{x}_{1,1}^{2} {x}_{1,{2}} {x}_{{2},1} y_{\bgroup\left( \begin{smallmatrix}{2}&
         1\\
              1&
                   0\\
                        \end{smallmatrix}\right) \egroup}+{x}_{1,1} {x}_{{2},1}^{2} {x}_{{2},{2}} y_{\bgroup\left( \begin{smallmatrix}1&
         0\\
              {2}&
                   1\\
                        \end{smallmatrix}\right) \egroup}\\
&+{x}_{1,1} {x}_{1,{2}} {x}_{{2},1}^{2} y_{\bgroup\left( \begin{smallmatrix}1&
         1\\
              {2}&
                   0\\
                        \end{smallmatrix}\right) \egroup}+{x}_{1,1}^{2} {x}_{{2},1} {x}_{{2},{2}} y_{\bgroup\left( \begin{smallmatrix}{2}&
         0\\
              1&
                   1\\
                        \end{smallmatrix}\right) \egroup})
\end{align*}
\noindent and has Betti table

 $$\begin{tabular}{c|ccccccccc}
        &0&1&2&3&4&5&6&7&8\\ \hline \hline
  \text{total:}&1&3&53&184&287&248&124&34&4 \\ \hline
  \text{0:}&1&\text{-}&\text{-}&\text{-}&\text{-}&\text{-}&\text{-}&\text{-}&\text{-}\\
  \text{1:}&\text{-}&\text{-}&\text{-}&\text{-}&\text{-}&\text{-}&\text{-}&\text{-}&\text{-}\\
  \text{2:}&\text{-}&\text{-}&\text{-}&\text{-}&\text{-}&\text{-}&\text{-}&\text{-}&\text{-}\\
  \text{3:}&\text{-}&\text{-}&\text{-}&\text{-}&\text{-}&\text{-}&\text{-}&\text{-}&\text{-}\\
  \text{4:}&\text{-}&3&\text{-}&\text{-}&\text{-}&\text{-}&\text{-}&\text{-}&\text{-}\\
  \text{5:}&\text{-}&\text{-}&\text{-}&\text{-}&\text{-}&\text{-}&\text{-}&\text{-}&\text{-}\\
  \text{6:}&\text{-}&\text{-}&\text{-}&\text{-}&\text{-}&\text{-}&\text{-}&\text{-}&\text{-}\\
  \text{7:}&\text{-}&\text{-}&\text{-}&\text{-}&\text{-}&\text{-}&\text{-}&\text{-}&\text{-}\\
  \text{8:}&\text{-}&\text{-}&3&\text{-}&\text{-}&\text{-}&\text{-}&\text{-}&\text{-}\\
  \text{9:}&\text{-}&\text{-}&3&4&\text{-}&\text{-}&\text{-}&\text{-}&\text{-}\\
  \text{10:}&\text{-}&\text{-}&13&46&68&56&28&8&1\\
  \text{11:}&\text{-}&\text{-}&33&132&218&192&96&26&3\\
  \text{12:}&\text{-}&\text{-}&1&2&1&\text{-}&\text{-}&\text{-}&\text{-}\\
  \end{tabular}$$
\end{eg}

We also note that our family of ideals subsumes another family of ideals studied by Caviglia in \cite{Caviglia:2004}.  Let $R = K[w,x,y,z]$ and let $d \ge 2$.  Then set
\[C_d = (x^d,y^d,x w^{d-1} - y z^{d-1}).\]
Caviglia showed that $\reg(R/C_d) = d^2 - 2$.  To our knowledge, this family has the fastest growing regularity relative to the degree of the generators in the three-generated case.  We note that these ideals are also a subfamily of the ideals defined in the previous section.  In fact, up to a relabeling of the variables, 
\[
C_d = I_{2,(1,d-2)}.
\]
In the following example, we show that some of our ideals have larger
regularity than Caviglia's examples.

\begin{eg} $I = I_{2,(2,1,2)}$\\

\noindent This is an ideal with 3 degree 6 generators such that $\pd(R/I) = 6$ and $\reg(R/I) = 41$. Its Betti table is displayed at the end of this section.

$$R = {K\left[{x}_{1,1},{x}_{1,{2}},{x}_{1,{3}},{x}_{{2},1},{x}_{{2},{2}},{x
      }_{{2},{3}}\right]}$$

$$I = ( x_{1,1}^6 , x_{2 ,1}^6 , x_{1,1}^2 x_{1, 2}^4 + x_{2 ,1}^2  x_{2,2}^4 + x_{1,1} x_{2 ,1}   x_{1, 2}    x_{1, 3}^3 + x_{1,1}   x_{2 ,1} x_{2 , 2}    x_{2 , 3}^3 )$$

Calculations with Macaulay2 \cite{M2} indicate that many of the ideals defined in the previous section have much larger regularity than even this example.  Specifically, we believe that the regularity of 
\[I =
I_{2,(\underbrace{\scriptstyle 2,2,2,\dots,2}_{p \,\,\text{times}},1,i)}.
\]
has regularity that grows asymptotically as $(p+2)^i$.  When $p = 0$,
this agrees with Caviglia's result.  However his methods do not extend
to the ideals above.  We note that the regularity of $R/I$ is bounded
below by the degrees of the socle elements.  However, the socle
elements we computed above only grow linearly with the degrees of the
generators.  Computing the regularity of the ideals above would
provide interesting computational examples and also give some insight
into the regularity version of Stillman's question.

Betti Table of $R/I_{2,(2,1,2)}$:\\

\pagestyle{empty}
$\begin{tabular}{c|ccccccc}
     &0&1&2&3&4&5&6\\
     \hline\hline
     \text{total:}&1&3&75&247&320&188&42\\
     \hline
     \text{0:}&1&\text{-}&\text{-}&\text{-}&\text{-}&\text{-}&\text{-}\\
     \text{1:}&\text{-}&\text{-}&\text{-}&\text{-}&\text{-}&\text{-}&\text{-}\\
     \text{2:}&\text{-}&\text{-}&\text{-}&\text{-}&\text{-}&\text{-}&\text{-}\\
     \text{3:}&\text{-}&\text{-}&\text{-}&\text{-}&\text{-}&\text{-}&\text{-}\\
     \text{4:}&\text{-}&\text{-}&\text{-}&\text{-}&\text{-}&\text{-}&\text{-}\\
     \text{5:}&\text{-}&3&\text{-}&\text{-}&\text{-}&\text{-}&\text{-}\\
     \text{6:}&\text{-}&\text{-}&\text{-}&\text{-}&\text{-}&\text{-}&\text{-}\\
     \text{7:}&\text{-}&\text{-}&\text{-}&\text{-}&\text{-}&\text{-}&\text{-}\\
     \text{8:}&\text{-}&\text{-}&\text{-}&\text{-}&\text{-}&\text{-}&\text{-}\\
     \text{9:}&\text{-}&\text{-}&\text{-}&\text{-}&\text{-}&\text{-}&\text{-}\\
     \text{10:}&\text{-}&\text{-}&3&\text{-}&\text{-}&\text{-}&\text{-}\\
     \text{11:}&\text{-}&\text{-}&\text{-}&\text{-}&\text{-}&\text{-}&\text{-}\\
     \text{12:}&\text{-}&\text{-}&\text{-}&\text{-}&\text{-}&\text{-}&\text{-}\\
     \text{13:}&\text{-}&\text{-}&2&3&\text{-}&\text{-}&\text{-}\\
     \text{14:}&\text{-}&\text{-}&\text{-}&\text{-}&\text{-}&\text{-}&\text{-}\\
     \text{15:}&\text{-}&\text{-}&\text{-}&\text{-}&\text{-}&\text{-}&\text{-}\\
     \text{16:}&\text{-}&\text{-}&3&6&3&\text{-}&\text{-}\\
     \text{17:}&\text{-}&\text{-}&\text{-}&\text{-}&\text{-}&\text{-}&\text{-}\\
     \text{18:}&\text{-}&\text{-}&1&4&5&2&\text{-}\\
     \text{19:}&\text{-}&\text{-}&4&8&4&\text{-}&\text{-}\\
     \text{20:}&\text{-}&\text{-}&1&4&6&4&1\\
     \text{21:}&\text{-}&\text{-}&2&8&10&4&\text{-}\\
     \text{22:}&\text{-}&\text{-}&6&14&11&4&1\\
     \text{23:}&\text{-}&\text{-}&2&8&12&8&2\\
     \text{24:}&\text{-}&\text{-}&4&16&21&10&1\\
     \text{25:}&\text{-}&\text{-}&8&20&18&8&2\\
     \text{26:}&\text{-}&\text{-}&3&12&18&12&3\\
     \text{27:}&\text{-}&\text{-}&6&24&32&16&2\\
     \text{28:}&\text{-}&\text{-}&3&12&18&12&3\\
     \text{29:}&\text{-}&\text{-}&4&16&24&16&4\\
     \text{30:}&\text{-}&\text{-}&3&12&18&12&3\\
     \text{31:}&\text{-}&\text{-}&4&16&24&16&4\\
     \text{32:}&\text{-}&\text{-}&1&4&6&4&1\\
     \text{33:}&\text{-}&\text{-}&4&16&24&16&4\\
     \text{34:}&\text{-}&\text{-}&1&4&6&4&1\\
     \text{35:}&\text{-}&\text{-}&2&8&12&8&2\\
     \text{36:}&\text{-}&\text{-}&1&4&6&4&1\\
     \text{37:}&\text{-}&\text{-}&2&8&12&8&2\\
     \text{38:}&\text{-}&\text{-}&1&4&6&4&1\\
     \text{39:}&\text{-}&\text{-}&2&8&12&8&2\\
     \text{40:}&\text{-}&\text{-}&\text{-}&\text{-}&\text{-}&\text{-}&\text{-}\\
     \text{41:}&\text{-}&\text{-}&2&8&12&8&2\\
     \end{tabular}$
\end{eg}

\section*{Acknowledgments}

We thank the AMS, NSF, and the organizers of this Mathematical Research Communities for making this
work possible.  We would especially like to thank Craig Huneke for several useful conversations.  Additionally, the ideals found in this note were inspired by many
Macaulay2 computations.  The interested reader should contact the authors if they would like
Macaulay2 code for investigating these ideals further.

\nocite{*}
\bibliographystyle{amsalpha} 
\bibliography{sources}

\end{document}